\theoremstyle{definition}
\newtheorem{The}{Theorem}
\newtheorem{Def}[The]{Definition}
\newtheorem{Cor}[The]{Corollary}
\newtheorem{Lem}[The]{Lemma}
\newtheorem{Rem}[The]{Remark}
\def\cv{\ensuremath{\text {Cor}}}
\newcommand{\dif}{\mathrm{d}}
\begin{document}

\title[Stretched exponential large deviations]{Large deviations for dynamical systems with stretched exponential decay of correlations}

\author[R. Aimino]{Romain Aimino}
\address{Romain Aimino\\ Centro de Matem\'{a}tica da Universidade do Porto\\ Rua do
Campo Alegre 687\\ 4169-007 Porto\\ Portugal}
\email{\href{mailto:romain.aimino@fc.up.pt}{romain.aimino@fc.up.pt}}
\urladdr{\url{http://www.fc.up.pt/pessoas/romain.aimino/}}

\author[J. M. Freitas]{Jorge Milhazes Freitas}
\address{Jorge Milhazes Freitas\\ Centro de Matem\'{a}tica \& Faculdade de Ci\^encias da Universidade do Porto\\ Rua do
Campo Alegre 687\\ 4169-007 Porto\\ Portugal}
\email{\href{mailto:jmfreita@fc.up.pt}{jmfreita@fc.up.pt}}
\urladdr{\url{http://www.fc.up.pt/pessoas/jmfreita/}}

\thanks{RA was partially supported by FCT grant SFRH/BPD/123630/2016. Both authors were partially supported by FCT projects FAPESP/19805/2014, PTDC/MAT-CAL/3884/2014 and PTDC/MAT-PUR/28177/2017, with national funds, and by CMUP (UID/MAT/00144/2019), which is funded by FCT with national (MCTES) and European structural funds through the programs FEDER, under the partnership agreement PT2020. RA would like to thank Jean-Ren\'e Chazottes and Cesar Maldonado for interesting discussions on this topic. JMF would like to thank Jos\'e Alves for useful comments. The authors would also like to thank the referees for their useful suggestions.}

\begin{abstract}
We obtain large deviations estimates for systems with stretched exponential decay of correlations, which improve the ones previously obtained in the literature. As a consequence we obtain better large deviations estimates for Viana maps and get large deviations estimates for a class of intermittent maps with stretched exponential loss of memory.
\end{abstract}

\maketitle

\section{Introduction}
\label{sec:introduction}

In the last two decades the study of statistical properties of non-uniformly dynamical systems has been capturing much interest and attention. The memory loss of the systems given in terms of decay of correlations and its connections with limiting laws, such as central limit theorems, invariance principles, extreme value distributions, and other properties such as large deviation principles have been investigated thoroughly. In order to prove such properties, several different techniques have been used, such as: inducing, coupling, spectral analysis of transfer operators or renewal equations. One of the tools that has revealed to be very powerful is the construction of Gibbs-Markov structures called Young towers whose inducing times allow to describe the rates of mixing of the system (\cite{Y98,Y99}) and ultimately obtain estimates for large deviations (\cite{MN08}) and several types of limiting laws.

In \cite{AFLV11}, the authors studied the relations between the rates of decay of correlations, large deviation estimates and the tail of the inducing times of Young towers. In particular, they proved a sort of reciprocal of Young's results, namely, they have shown that if the system has a certain rate of decay of correlations, then it admits a Young tower whose inducing times' tail decays at a similar rate. This was done both for local diffeomorphisms and maps with critical/singular sets. The construction of the Young tower and the estimates on the induced time tail follow from large deviations estimates for the expansion function and for the distance to the critical/singular set. Hence, at the core of that paper is a result (\cite[Theorem~D]{AFLV11}) that establishes a connection between the rates for large deviations estimates of observable functions and the rates of decay of correlations of those observables against essentially bounded functions. Two types of decay rates were considered: polynomial and stretched exponential. To be more precise, this result asserted that if a systems has decay of correlations for observables on a certain Banach space against all essentially bounded functions at a polynomial rate or at a stretched exponential rate, then the large deviations estimates for those particular observables decay, respectively, at a polynomial or stretched exponential rate. Moreover, the dependence of the exponents and constants was clearly stated. We remark that the polynomial case had already been proved in \cite{M09}. However, as a corollary from the stretched exponential counterpart, in \cite{AFLV11}, the authors obtained, for the first time, stretched exponential estimates for the large deviations of H\"older continuous observables evaluated along the orbits of Viana maps. Under certain certain more restrictive assumptions, in \cite[Theorem~E]{AFLV11}, the exponential case was also covered, but the assumption on the decay of correlations was rather very strong.

The main goal of this note is to improve the large deviations estimates for the stretched exponential case obtained in \cite[Theorem~D]{AFLV11}. Namely, we obtain a smaller loss in the exponent of the stretched exponential rate when one goes from decay of correlations to large deviations estimates. We apply this result to Viana maps in order to obtain the best rates of large deviations estimates for these maps available in the literature. We also apply it to some some non-uniformly expanding maps introduced in \cite{CDKM18}, which result from a modification of the intermittent maps studied in \cite{LSV99} carried out in order to produce stretched exponential decay of correlations. 

The proof of the result is based on a technique introduced by Gordin that allows to write the sum of the random variables generated by the dynamics as a sum of reverse time martingale differences plus a coboundary. Then the problem is reduced to  control the large deviations of the sum of martingale differences. In the polynomial case, in \cite{M09}, this is done using Rio's inequality. In the stretched exponential case, in \cite{AFLV11}, the Azuma-Hoeffding inequality was used, instead. Here, we use again Rio's inequality and a power series expansion of the exponential moments of the sum of martingale differences in order to improve the exponent in the large deviation estimates obtained in \cite[Theorem~D]{AFLV11} .  

This short paper is organised as follows. In Section~\ref{sec:results} we state the main result and give some applications. Section~\ref{sec:proofs} is dedicated to the proofs.     

\section{Statement of results and applications}
\label{sec:results}

Let $T : X \to X$ be a dynamical system with an ergodic invariant probability measure $\mu$. Let $\varphi : X \to \mathbb{R}$ be an observable with $\varphi \in L^1(\mu)$  and let $$\varphi_n = \sum_{k=0}^{n-1} \varphi \circ T^k.$$ 
Birkhoff's ergodic theorem guarantees that for $\mu$-a.e. $x\in X$, time averages converge to the spatial average, \emph{i.e.}, 
$$
\lim_{n\to\infty}\frac {1}{n}\varphi_n(x)=\int \varphi \,d\mu.
$$
The study of \emph{Large Deviations} concerns the probability of observing a deviation from the mean for Birkhoff averages, namely, for a deviation size $\varepsilon>0$, we define:
\begin{equation*}
\text{LD}(\varepsilon, \varphi,n)=\mu\left(\left|\frac {1}{n}\varphi_n-\int \varphi \,d\mu\right|>\varepsilon\right).
\end{equation*}
Of course, Birkhoff's ergodic theorem implies that for all $\varepsilon$, we have that $\text{LD}(\varepsilon, \varphi,n)$ vanishes as $n\to\infty$. Large Deviations theory concerns the rate at which this quantity goes to $0$. In the classical case of independent and identically distributed random variables with a finite exponential moment, $\text{LD}(\varepsilon, \varphi,n)$ decays exponentially fast in $n$ and, in fact, one can prove a Large Deviations principle which establishes that there exists a strictly convex function $I(\varepsilon)$, vanishing only at $0$, such that
$$
\lim_{n\to\infty}\frac1n\log(\text{LD}(\varepsilon, \varphi,n))=-I(\varepsilon).
$$
In this setting the function $I$ is also called Cr\'amer function and can be obtained from the knowledge of the distribution of random variable $\varphi$. Large Deviations principles have been proved by several authors for uniformly hyperbolic systems (see for example \cite{OP88,K90,L90,Y90,W96}). 

One of the main aspects of raised in \cite{M09, AFLV11}  was the intimate connection between Large Deviations and Decay of Correlations. 

\begin{Def}[Decay of correlations]
\label{def:dc}
Let \( \mathcal C_{1}, \mathcal C_{2} \) denote Banach spaces of real valued measurable functions defined on \( X \).
We denote the \emph{correlation} of non-zero functions $\phi\in \mathcal C_{1}$ and  \( \psi\in \mathcal C_{2} \) w.r.t.\ a measure $\mu$ as
\[
\cv_\mu(\phi,\psi,n):=\frac{1}{\|\phi\|_{\mathcal C_{1}}\|\psi\|_{\mathcal C_{2}}}
\left|\int \phi\, (\psi\circ T^n)\, \dif\mu-\int  \phi\, \dif\mu\int
\psi\, \dif\mu\right|.
\]

We say that we have \emph{decay
of correlations}, w.r.t.\ the measure $\mu$, for observables in $\mathcal C_1$ \emph{against}
observables in $\mathcal C_2$ if, for every $\phi\in\mathcal C_1$ and every
$\psi\in\mathcal C_2$ we have
 $$\cv_\mu(\phi,\psi,n)\to 0,\quad\text{ as $n\to\infty$.}$$
  \end{Def}

We say that we have \emph{decay of correlations against $L^1$
observables} whenever  this holds for $\mathcal C_2=L^1(\mu)$  and
$\|\psi\|_{\mathcal C_{2}}=\|\psi\|_1=\int |\psi|\,\dif\mu$.

For non-uniformly hyperbolic systems, decay of correlations may not be exponential and, in those cases, the decay rate of $\text{LD}(\varepsilon, \varphi,n)$ is not exponential. For example, in the intermittent case of the Manneville -- Pomeau maps, it has been proved that  $\text{LD}(\varepsilon, \varphi,n)$ decays polynomially fast (see for example \cite{MN08,PS09,M09, AFLV11}). In some cases, like for Viana maps, for which streched exponential decay of correlations has been proved, the rate of decay of $\text{LD}(\varepsilon, \varphi,n)$ is stretched exponential (\cite{AFLV11}). For other results regarding Large Deviations for non-uniformly hyperbolic systems, see for example: \cite{AP06,MN08,M09,PS09,AFLV11,V12,CTY17}.

Exponential decay of correlations alone has not proved to be enough to prove exponential Large Deviations. So far, exponential Large Deviations rates have only been obtained by requiring some stronger properties, like a spectral gap for the Perron-Frobenius operator or decay of correlations against all $L^1$ observations (see discussion in \cite{AFLV11}). There is some loss in the exponents when one goes from decay of correlations estimates to rates of Large Deviations, unless some more information is known. 

Very recently, in \cite{NT19}, the authors show that even for very well behaved systems with exponential decay of correlations, for unbounded observables, one cannot obtain Large Deviations rates better than stretched exponential. We consider observables in $L^\infty$ and we managed to improve the loss in the exponents observed from the rates of decay of correlations to Large Deviations rates. However, it remains an open question whether the loss we managed to improve here is optimal (as in the unbounded case) or not.

We are now ready to state our main result. In what follows, for notational simplicity, we assume without loss of generality that $\int_X \varphi d \mu = 0$. 

\begin{The} \label{pro:large_deviations} \em
Let $\varphi \in L^\infty(\mu)$. Suppose that there exist $C_\varphi>0$, $\tau>0$ and $\theta \in (0,1]$ such that 
\begin{equation} \label{eq:decay}
\left|\int_X \varphi . \psi \circ T^n d \mu \right| \le C_\varphi \| \psi \|_{L^\infty_\mu}  e^{-\tau n^\theta}, \: \: \forall \psi \in L^\infty(\mu), \: n \ge 0.
\end{equation}
Then there exists $c = c(\theta, \tau)>0$ such that for all $n \ge 1$ and $\epsilon >0$,
\[
\mu( |\varphi_n| > n \epsilon) \le 2 e^{- \tau' \epsilon^{2 \theta'} n^{\theta'}},
\]
with $ \theta' = \frac{\theta}{\theta + 1}$, $\tau' = c \widetilde{C}_\varphi^{-2 \theta'}$ and $\widetilde{C}_\varphi = \max\{ \| \varphi \|_{L^\infty_\mu}, C_\varphi\}$.
\end{The}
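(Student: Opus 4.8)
The plan follows the Gordin martingale strategy announced in the introduction. First I would introduce the transfer (Perron--Frobenius) operator $P$ associated to $T$ with respect to $\mu$, characterized by $\int (P\phi)\,\psi\,\dif\mu = \int \phi\,(\psi\circ T)\,\dif\mu$ for all $\phi\in L^1(\mu)$, $\psi\in L^\infty(\mu)$. The key observation is that the decay hypothesis \eqref{eq:decay} controls $\|P^n\varphi\|_{L^1_\mu}$: taking $\psi=\mathrm{sgn}(P^n\varphi)$ (which lies in $L^\infty$ with unit norm) in the duality gives $\|P^n\varphi\|_{L^1_\mu} = \int \varphi\cdot(\psi\circ T^n)\,\dif\mu \le C_\varphi e^{-\tau n^\theta}$. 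Summability of this tail lets me define the corrected observable $\tilde\varphi = \varphi + g - g\circ T$, where $g = \sum_{k\ge 1} P^k\varphi$ converges in $L^1$, and verify that $P\tilde\varphi = 0$. This makes $\{\tilde\varphi\circ T^k\}$ a reverse-time martingale-difference sequence with respect to the decreasing filtration $\mathcal F_k = T^{-k}\mathcal B$, so that $\tilde\varphi_n=\sum_{k=0}^{n-1}\tilde\varphi\circ T^k$ is a sum of (reverse) martingale differences, while $\varphi_n = \tilde\varphi_n + g - g\circ T^n$ differs from it by a bounded coboundary.

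Next I would reduce the large-deviation estimate for $\varphi_n$ to one for the martingale sum $\tilde\varphi_n$. Since $|g - g\circ T^n|\le 2\|g\|_\infty$ and $\|g\|_\infty$ is controlled by $\sum_k \|P^k\varphi\|_{L^\infty}$ — here I would need an $L^\infty$ bound on the iterates, presumably extracting $\|P^k\varphi\|_\infty \le C_\varphi e^{-\tau k^\theta}$ from a suitable interpretation of the hypothesis against $L^\infty$ test functions — the coboundary only shifts $\epsilon$ by an $O(1/n)$ amount, negligible for the stated rate. Thus it suffices to bound $\mu(|\tilde\varphi_n| > n\epsilon/2)$ via an exponential Chebyshev inequality, $\mu(|\tilde\varphi_n|>n\epsilon')\le 2\,e^{-\lambda n\epsilon'}\,\mathbb{E}_\mu[e^{\lambda\tilde\varphi_n}]$ (plus the symmetric term), and then optimize over $\lambda>0$.

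The heart of the argument is estimating the exponential moment $\mathbb{E}_\mu[e^{\lambda\tilde\varphi_n}]$. I would expand it as a power series $\sum_{p\ge 0}\frac{\lambda^p}{p!}\mathbb{E}_\mu[\tilde\varphi_n^p]$ and bound the moments. This is where Rio's inequality enters: for the sum of martingale differences it gives a bound on the $L^p$ norm of $\tilde\varphi_n$ of the form $\|\tilde\varphi_n\|_p \le C\sqrt{p}\,(n\,b_n^{(p)})^{1/2}$, where $b_n^{(p)}$ aggregates the conditional-variance / covariance contributions $\sum_{k}\|\tilde\varphi\cdot(\tilde\varphi\circ T^k)\|$-type terms controlled again by the correlation decay \eqref{eq:decay}. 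Feeding these moment bounds into the series and summing should produce a bound of the shape $\mathbb{E}_\mu[e^{\lambda\tilde\varphi_n}]\le \exp\!\big(\kappa\,\lambda^2 n\,h(\lambda)\big)$ for an explicit correction $h$ reflecting the stretched-exponential weights; optimizing the resulting expression $-\lambda n\epsilon + \kappa\lambda^2 n\,h(\lambda)$ over $\lambda$ is what yields the exponent $n^{\theta'}\epsilon^{2\theta'}$ with $\theta'=\theta/(\theta+1)$ rather than the weaker exponent from a crude Azuma--Hoeffding bound.

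I expect the main obstacle to be the bookkeeping in the power-series/moment step: obtaining from Rio's inequality moment bounds $\mathbb{E}_\mu[\tilde\varphi_n^p]$ with the \emph{right} dependence on $p$ and $n$ so that, after summing the series and choosing the optimal $\lambda = \lambda(n,\epsilon)$, the stretched exponent comes out as $\theta' = \theta/(\theta+1)$ and $\tau'$ scales like $\widetilde C_\varphi^{-2\theta'}$. The interplay is delicate: choosing $\lambda$ too large makes high-order moments dominate (the series may even diverge), while too small a $\lambda$ wastes the deviation; the sharp rate arises precisely from balancing the polynomial-in-$p$ growth of the martingale moments against the stretched-exponential decay of the correlations that feeds into the conditional variances. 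Tracking the constant $c(\theta,\tau)$ and verifying the clean homogeneity in $\epsilon$ and $\widetilde C_\varphi$ will require care, but the structure of the estimate is forced by this optimization.
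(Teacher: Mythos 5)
Your overall architecture (Gordin decomposition via the transfer operator, Rio's inequality with the $p$-dependence tracked, then an exponential Chebyshev argument) matches the paper's strategy, but two steps contain genuine gaps. First, the claim that $\|g\|_\infty$ is controlled because $\|P^k\varphi\|_{\infty} \le C_\varphi e^{-\tau k^\theta}$ can be ``extracted'' from the hypothesis is false: \eqref{eq:decay} tests against $\psi \in L^\infty(\mu)$, so by duality it only yields $\|\mathcal{L}^n\varphi\|_{L^1_\mu} \le C_\varphi e^{-\tau n^\theta}$; an $L^\infty$ decay of $\mathcal{L}^n\varphi$ would require decay of correlations against $L^1$ observables, a strictly stronger hypothesis (which, as discussed in the introduction, already gives \emph{exponential} large deviations). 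Interpolating the $L^1$ decay against the trivial bound $\|\mathcal{L}^n\varphi\|_{L^\infty_\mu} \le \|\varphi\|_{L^\infty_\mu}$ only gives $\|\chi\|_{L^q_\mu} \le K\widetilde{C}_\varphi q^{1/\theta}$ for the coboundary term $\chi = \sum_{k \ge 1}\mathcal{L}^k\varphi$, i.e.\ stretched-exponential tails but \emph{not} boundedness. So your reduction ``the coboundary only shifts $\epsilon$ by $O(1/n)$'' fails as stated. The paper sidesteps this entirely by applying Rio's inequality directly to $\varphi_n$ (not to the martingale part), using the decomposition only inside the conditional expectations $\sum_{k=i}^j \mathbb{E}(\varphi\circ T^k \mid \mathcal{F}_i) = \phi\circ T^i - \mathbb{E}(\chi\circ T^i\mid\mathcal{F}_i) + \mathbb{E}(\chi\circ T^{j+1}\mid\mathcal{F}_i)$, where the $L^q$ control of $\chi$ suffices.

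Second, and more seriously, the heart of your plan --- bounding the Laplace transform $\mathbb{E}_\mu[e^{\lambda\tilde\varphi_n}]$ by summing the moment series --- cannot work for $\theta < 1$. The moments coming out of Rio are $\|\varphi_n\|_{L^p_\mu} \le K\widetilde{C}_\varphi\, p^{\frac12(1+\frac1\theta)} n^{1/2}$, so the $p$-th term of $\sum_p \frac{\lambda^p}{p!}\mathbb{E}[|\tilde\varphi_n|^p]$ is of order $\bigl(\lambda e K n^{1/2}\bigr)^p p^{\,p\frac{1-\theta}{2\theta}}$, which diverges for \emph{every} $\lambda > 0$ when $\theta < 1$; indeed $e^{\lambda\tilde\varphi_n}$ may not even be integrable, since $\chi \notin L^\infty(\mu)$ in general. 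You half-notice this (``the series may even diverge'') but supply no fix, and no bound of the shape $\exp(\kappa\lambda^2 n\, h(\lambda))$ is obtainable this way. The paper's decisive device, missing from your proposal, is to exponentiate the \emph{fractional power} $|\varphi_n|^{2\theta'}$: the exponent $2\theta'$ is calibrated exactly so that $2\theta'\cdot\frac12\bigl(1+\frac1\theta\bigr) = 1$, whence the expansion of $\mathbb{E}\bigl[e^{\tau' n^{-\theta'}|\varphi_n|^{2\theta'}}\bigr]$ has $k$-th term bounded by $\bigl(2e\theta' K^{2\theta'}\widetilde{C}_\varphi^{2\theta'}\tau'\bigr)^k$, a geometric series, and plain Markov then gives the theorem with $\tau' = c\widetilde{C}_\varphi^{-2\theta'}$. (Equivalently, one could apply Markov with a single moment optimized at $p \asymp (\epsilon n^{1/2}/\widetilde{C}_\varphi)^{2\theta'} = c\,\epsilon^{2\theta'} n^{\theta'}\widetilde{C}_\varphi^{-2\theta'}$, which yields the same rate.) Your intuition that the exponent $\theta' = \theta/(\theta+1)$ arises from balancing the polynomial moment growth against the deviation is correct, but without the fractional-exponential or optimal-single-moment device the optimization you describe has nothing finite to optimize.
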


Note that the exponent $\theta'= \frac{\theta}{\theta + 1}$ that we obtain for the large deviations estimate is larger than the one obtained in \cite[Theorem~D]{AFLV11}, where the exponent was equal to $\frac{\theta}{\theta + 2}$.

Of course, this result gives rise to the natural question regarding to which extent the exponent for the stretched exponential large deviations estimate that we managed to improve here is optimal. 

\subsection{Large deviations for Viana maps}
\label{subsec:Viana-maps}
In \cite{V97}, Viana introduced an important class of nonuniform expanding dynamical
systems with critical sets in dimension greater than one.  This class of  maps
 can be described as
follows. Let $a_0\in(1,2)$ be such that the critical point $x=0$
is pre-periodic for the quadratic map $Q(x)=a_0-x^2$. Let
$S^1=\mathbb R/\mathbb Z$ and $b:S^1\rightarrow \mathbb R$ be a Morse function, for
instance, $b(s)=\sin(2\pi s)$. For fixed small $\alpha>0$,
consider the map
 \[ \begin{array}{rccc} \hat f: & S^1\times\mathbb R
&\longrightarrow & S^1\times \mathbb R\\
 & (s, x) &\longmapsto & \big(\hat g(s),\hat q(s,x)\big)
\end{array}
 \]
 where  $\hat q(s,x)=a(s)-x^2$ with
$a(s)=a_0+\alpha b(s)$, and $\hat g$ is the uniformly expanding
map of the circle defined by $\hat{g}(s)=ds$ (mod $\mathbb Z$) for some
integer $d\geq 2$. It is
easy to check that for $\alpha>0$ small enough there is an
interval $I\subset (-2,2)$ for which $\hat f(S^1\times I)$ is
contained in the interior of $S^1\times I$. Hence, $S^1\times I$ is a forward invariant region for any map $f$
sufficiently close to $\hat f$ in the $C^0$ topology. Any such map restricted to $S^1\times I$ is called a \emph{Viana map}.
It was shown in \cite{A00} that Viana maps have a unique ergodic absolutely continuous invariant probability measure (acip) \( \mu \).
In \cite{G06}, it was proved that every Viana map exhibits stretched exponential decay of correlations, with \( \theta=1/2  \), for H\"older continuous functions against \( L^{\infty}(\mu) \) functions.
Consequently, the following corollary is a direct application of Theorem~\ref{pro:large_deviations}.

\begin{Cor} \em
Let \( f \) be a Viana map and let \( \mu \) be its unique acip. Then, for every H\"older continuous observable \( \varphi \) and every \( \epsilon>0 \), there exist
 \(\tau=\tau(\varphi, \epsilon)>0 \) and \( C=C({\varphi, \epsilon})> 0 \) such that
\( \mu\left(\left|\frac{1}{n}\varphi_n-\int\varphi\, d\mu\right|> \epsilon\right) \leq C e^{-\tau n^{1/3}}.
 \)
\end{Cor}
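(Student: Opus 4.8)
The plan is to obtain the Corollary as a direct application of Theorem~\ref{pro:large_deviations}, the only work being to cast the Viana-map data into the exact form required by its hypothesis. First I would reduce to the mean-zero case: replacing \(\varphi\) by \(\bar\varphi:=\varphi-\int\varphi\,\dif\mu\) leaves the deviation event unchanged, since \(\frac1n\varphi_n-\int\varphi\,\dif\mu=\frac1n\bar\varphi_n\), so that \(\mu\big(|\frac1n\varphi_n-\int\varphi\,\dif\mu|>\epsilon\big)=\mu(|\bar\varphi_n|>n\epsilon)\) with \(\bar\varphi\) again Hölder continuous and \(\int\bar\varphi\,\dif\mu=0\). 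Subtracting the constant \(\int\varphi\,\dif\mu\) is done once and affects none of the subsequent estimates.

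Next I would quote the decay-of-correlations input. By \cite{G06}, every Viana map has stretched-exponential decay of correlations with \(\theta=1/2\) for Hölder observables against \(L^\infty(\mu)\): there are \(C>0\) and \(\tau>0\) with \(\big|\int \bar\varphi\,(\phi\circ T^n)\,\dif\mu-\int\bar\varphi\,\dif\mu\int\phi\,\dif\mu\big|\le C\|\bar\varphi\|_{\mathrm{Hol}}\,\|\phi\|_{L^\infty_\mu}\,e^{-\tau n^{1/2}}\) for all \(\phi\in L^\infty(\mu)\). Since \(\int\bar\varphi\,\dif\mu=0\), the product term on the left vanishes, and this is exactly the hypothesis \eqref{eq:decay} of Theorem~\ref{pro:large_deviations}, read with the theorem's fixed observable equal to \(\bar\varphi\), its test function \(\psi\) equal to \(\phi\), exponent \(\theta=1/2\), and constant \(C_{\bar\varphi}=C\|\bar\varphi\|_{\mathrm{Hol}}\).

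Finally I would invoke Theorem~\ref{pro:large_deviations}. With \(\theta=1/2\) the output exponent is \(\theta'=\frac{\theta}{\theta+1}=\frac{1/2}{3/2}=\frac13\), so the theorem gives \(\mu(|\bar\varphi_n|>n\epsilon)\le 2\exp\!\big(-\tau'\epsilon^{2/3}n^{1/3}\big)\) with \(\tau'=c\,\widetilde C_{\bar\varphi}^{-2/3}\) and \(\widetilde C_{\bar\varphi}=\max\{\|\bar\varphi\|_{L^\infty_\mu},C_{\bar\varphi}\}\). Absorbing the \(\epsilon\)- and \(\varphi\)-dependent factors, that is, setting \(C=2\) and \(\tau(\varphi,\epsilon)=\tau'\epsilon^{2/3}=c\,\widetilde C_{\bar\varphi}^{-2/3}\epsilon^{2/3}\), yields \(\mu\big(|\frac1n\varphi_n-\int\varphi\,\dif\mu|>\epsilon\big)\le C e^{-\tau n^{1/3}}\), as claimed. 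I do not expect any genuine obstacle at the level of the Corollary: all the difficulty is inherited from Theorem~\ref{pro:large_deviations}, and the only points requiring care are the centering step and verifying that the Hölder-against-\(L^\infty\) correlation bound of \cite{G06} matches \eqref{eq:decay} verbatim, in particular that the Banach space \(\mathcal C_1\) of the theorem accommodates the Hölder observables and that the test class is the full space \(L^\infty(\mu)\), so that the bound holds uniformly over all \(\psi\in L^\infty(\mu)\).
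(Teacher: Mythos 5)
Your proposal is correct and matches the paper's argument exactly: the paper derives this corollary as a direct application of Theorem~\ref{pro:large_deviations}, citing \cite{G06} for stretched exponential decay of correlations with \(\theta=1/2\) for H\"older observables against \(L^\infty(\mu)\), so that \(\theta'=\frac{1/2}{3/2}=\frac13\). Your centering step and the identifications \(C=2\), \(\tau=\tau'\epsilon^{2/3}\) are precisely the routine bookkeeping the paper leaves implicit.
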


For comparison purposes, we remark that the large deviations estimate obtained in \cite[Theorem~G]{AFLV11} was of the order of $e^{-\tau'n^{1/5}}$.
 
\subsection{Large deviations for intermittent maps with stretched exponential decay of correlations}
\label{subsec:Viana-maps}
We consider the family of interval maps $f_\gamma:[0,1]\to[0,1]$, with $\gamma\in(0,1]$, introduced in \cite[Appendix~A]{CDKM18}, which result from adapting the intermittent maps studied in \cite{LSV99} so that the contact between the graph of $f_\beta$ and the identity at the fixed point creates a stretched exponential fast accumulation of the pre-orbit of $1/2$ at $0$, which eventually is responsible for stretched exponential decay of correlations. Namely, consider that
\begin{equation}\label{eq:LSV-new} 
 f_\gamma(x) = \begin{cases}
    x\left(1 + \frac{(\log 2)^{\gamma^{-1} - 1}}{|\log x|^{\gamma^{-1} - 1}}\right), & x \leq 1/2 \\
    2 x - 1, & x > 1/2
  \end{cases}.
  \end{equation}
From \cite[Theorem~A.1]{CDKM18}, it follows that $f_\gamma$ has an absolutely continuous invariant measure $\mu$ and exhibits stretched exponential decay of correlations, with \( \theta=\gamma  \), for H\"older continuous functions against \( L^{\infty}(\mu) \) functions. Hence, as consequence of Theorem~\ref{pro:large_deviations}, we obtain:
 \begin{Cor} \em
Let \( f_\gamma \) be as in \eqref{eq:LSV-new} and let \( \mu \) be its acip. Then, for every H\"older continuous observable \( \varphi \) and every \( \epsilon>0 \), there exist
 \(\tau=\tau(\varphi, \epsilon)>0 \) and \( C=C({\varphi, \epsilon})> 0 \) such that
\( \mu\left(\left|\frac{1}{n}\varphi_n-\int\varphi\, d\mu\right|> \epsilon\right) \leq C e^{-\tau n^{\gamma/(\gamma+1)}}.
 \)
\end{Cor}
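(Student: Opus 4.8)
\emph{Setup and reductions.} The plan is to follow the martingale route announced in the introduction. Write $\hat{T}$ for the transfer (Perron--Frobenius) operator of $T$ with respect to $\mu$, characterised by $\int (\hat{T}f)\,g\,\dif\mu=\int f\,(g\circ T)\,\dif\mu$. Since $\int_X\varphi\,(\psi\circ T^n)\,\dif\mu=\int_X(\hat{T}^n\varphi)\,\psi\,\dif\mu$, taking the supremum over $\|\psi\|_{L^\infty_\mu}\le 1$ in \eqref{eq:decay} shows that the hypothesis is \emph{equivalent} to the $L^1$ bound $\|\hat{T}^n\varphi\|_{1}\le C_\varphi e^{-\tau n^\theta}$. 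First I would record two reductions used repeatedly: since $|\varphi_n|\le n\|\varphi\|_{L^\infty_\mu}$ one may assume $\epsilon\le\|\varphi\|_{L^\infty_\mu}\le\widetilde{C}_\varphi$, and since a probability never exceeds $1$ one need only prove the estimate when $\tau'\epsilon^{2\theta'}n^{\theta'}>\log 2$; in particular one may assume $n\epsilon^2/\widetilde{C}_\varphi^{\,2}$ bounded below.

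\emph{Gordin decomposition.} Because $\sum_{k\ge1}\|\hat{T}^k\varphi\|_{1}<\infty$, the function $u:=\sum_{k\ge1}\hat{T}^k\varphi$ is well defined (in every $L^p(\mu)$, $p<\infty$, as will be seen) and satisfies $u-\hat{T}u=\hat{T}\varphi$. Setting $m:=\varphi+u-u\circ T$ one checks that $\hat{T}m=0$, so $\{m\circ T^j\}_{j\ge0}$ is a sequence of reverse-time martingale differences for the decreasing filtration $\{T^{-(j+1)}\mathcal{B}\}$, and
\[
\varphi_n=M_n+u\circ T^n-u,\qquad M_n:=\sum_{j=0}^{n-1}m\circ T^j .
\]
A union bound then reduces matters to controlling $\mu(|M_n|>n\epsilon/2)$ and $\mu(|u\circ T^n-u|>n\epsilon/2)$ separately. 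The quantitative input is a moment estimate for $m$: using that $\hat{T}$ is an $L^\infty(\mu)$ contraction together with the $L^1$ decay above and the interpolation inequality $\|\hat{T}^k\varphi\|_p\le\|\hat{T}^k\varphi\|_1^{1/p}\|\hat{T}^k\varphi\|_\infty^{1-1/p}$, one gets $\|\hat{T}^k\varphi\|_p\le\widetilde{C}_\varphi\,e^{-\tau k^\theta/p}$; summing the series and comparing it with $\int_0^\infty e^{-(\tau/p)x^\theta}\,\dif x=\Gamma(1+1/\theta)(p/\tau)^{1/\theta}$ yields $\|u\|_p\le K p^{1/\theta}$ and hence $\|m\|_p\le K p^{1/\theta}$ with $K=K(\theta,\tau)\widetilde{C}_\varphi$, while $\|m\|_2$ and $\|u\|_2$ stay bounded by a multiple of $\widetilde{C}_\varphi$. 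Thus $m$ has sub-Weibull tails of order $\theta$, and the same bounds hold for the coboundary since $\|u\circ T^n-u\|_p\le2\|u\|_p$.

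\emph{Concentration for the martingale.} The heart of the proof is to turn these moment bounds into the stretched-exponential estimate. Here I would invoke a martingale moment inequality of Rio (of Rosenthal type),
\[
\|M_n\|_p\le c_1\sqrt{p}\,\|m\|_2\,\sqrt{n}+c_2\,p\,n^{1/p}\|m\|_p ,\qquad p\ge2,
\]
and organise the estimate through the power-series expansion of the exponential moment of $M_n$, which amounts to optimising the order $p$ in the Markov bound $\mu(|M_n|>n\epsilon/2)\le(2\|M_n\|_p/(n\epsilon))^p$. The decisive point is that the slowly growing factor $\|m\|_p\sim p^{1/\theta}$ is attached \emph{only} to the rare-event second term, while the Gaussian bulk is governed by the constant $\|m\|_2$. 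The second term contributes $(cKp^{1+1/\theta}/(n\epsilon))^p$, whose minimisation over $p$ (attained near $p\sim(n\epsilon/\widetilde{C}_\varphi)^{\theta'}$) produces $\exp(-c(n\epsilon/\widetilde{C}_\varphi)^{\theta'})$ with exponent $\theta'=\theta/(\theta+1)$, while the first term gives the genuinely Gaussian contribution $\exp(-c\,n\epsilon^2/\widetilde{C}_\varphi^{\,2})$. In the admissible range $\epsilon\le\widetilde{C}_\varphi$ and $n\epsilon^2/\widetilde{C}_\varphi^{\,2}\gtrsim1$, both are bounded by $\exp(-c(n\epsilon^2/\widetilde{C}_\varphi^{\,2})^{\theta'})$, which is the asserted rate. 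The same optimisation applied to the $p$-th moment of the coboundary yields $\exp(-c(n\epsilon/\widetilde{C}_\varphi)^{\theta})$, a faster rate that is therefore absorbed.

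\emph{Main obstacle.} I expect the crux to be precisely the separation in the martingale moment inequality: it is only because the heavy factor $p^{1/\theta}$ multiplies the tail term rather than the square-function term that one reaches $\theta/(\theta+1)$; feeding $\|m\|_p$ into the Gaussian term instead (a plain Burkholder bound, or equivalently a truncation-plus-Azuma--Hoeffding argument) gives only the weaker exponent $\theta/(\theta+2)$ of \cite[Theorem~D]{AFLV11}. Care will also be needed with the factor $n^{1/p}$ and, in the small-deviation regime where the optimal order is small, with using the Gaussian term in place of the tail term.
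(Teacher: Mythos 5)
Your corollary-level reduction is the same as the paper's: \cite[Theorem~A.1]{CDKM18} supplies \eqref{eq:decay} with $\theta=\gamma$ for (centered) H\"older observables against $L^\infty(\mu)$, and everything then hinges on the abstract large deviations theorem, which you reconstruct. Your reconstruction starts exactly as the paper does (duality to get $\|\hat T^n\varphi\|_1\le C_\varphi e^{-\tau n^\theta}$, interpolation to get $\|u\|_p\le Kp^{1/\theta}$, Gordin decomposition), but the load-bearing step has a genuine gap: the inequality you attribute to Rio, $\|M_n\|_p\le c_1\sqrt p\,\|m\|_2\sqrt n+c_2\,p\,n^{1/p}\|m\|_p$, is the optimal Rosenthal inequality for \emph{i.i.d.}\ summands, where the conditional variance is deterministic. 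For stationary martingale differences, the Burkholder--Rosenthal inequality has $\sqrt p\,\bigl\|\bigl(\sum_i\mathbb{E}(m_i^2|\mathcal F_{i-1})\bigr)^{1/2}\bigr\|_p$ in place of $\sqrt{pn}\,\|m\|_2$, and since your $m=\varphi+u-u\circ T$ is unbounded with $\|m\|_p\sim p^{1/\theta}$, the only generic bound on that square function is $\sqrt n\,\|m\|_p\sim\sqrt n\,p^{1/\theta}$ --- which feeds the heavy factor into the Gaussian term and returns exactly the weaker exponent $\theta/(\theta+2)$ of \cite{AFLV11} that you were trying to beat. The inequality as you state it is in fact false for stationary martingale difference sequences: take $m_i=\xi_i\,\eta$ with $(\xi_i)$ i.i.d.\ signs and $\eta$ an $\mathcal F_0$-measurable variable with $\|\eta\|_p\sim p^{1/\theta}$; then $\|M_n\|_p=\|\eta\|_p\,\|\sum_{i\le n}\xi_i\|_p\sim p^{1/\theta}\sqrt{pn}$, which at $p\sim\log n$ exceeds your right-hand side by an unbounded factor $(\log n)^{1/\theta}$. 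So the ``decisive point'' of your argument --- the heavy factor attached only to the rare-event term --- rests on an estimate that is not available, and repairing it within your scheme would require controlling $\|\sum_i\mathbb{E}(m_i^2|\mathcal F_{i-1})\|_{p/2}$ by $n\|m\|_2^2$ plus acceptable errors, e.g.\ by decorrelation applied to $m^2$; but \eqref{eq:decay} for $\varphi$ gives no decay for $m^2$.

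The paper realizes your separation idea by a different mechanism, and this is the fix: it never splits $\varphi_n$ into $M_n$ plus a coboundary. Instead it applies Rio's inequality \cite[Theorem~2.5]{R00} --- valid for adapted, not necessarily martingale, sequences --- \emph{directly} to $\varphi_n$: $\|\varphi_n\|_{L^{2q}_\mu}^{2q}\le\bigl(4q\sum_{i=1}^n b_{i,n}\bigr)^q$ with $b_{i,n}=\max_{i\le j\le n}\bigl\|\varphi\circ T^i\sum_{k=i}^j\mathbb{E}(\varphi\circ T^k|\mathcal F_i)\bigr\|_{L^q_\mu}$. The crucial structural gain is that $b_{i,n}$ retains the \emph{bounded} factor $\varphi\circ T^i$, while the martingale--coboundary decomposition is used only inside $b_{i,n}$ to telescope $\sum_{k=i}^j\mathbb{E}(\varphi\circ T^k|\mathcal F_i)=\phi\circ T^i-\mathbb{E}(\chi\circ T^i|\mathcal F_i)+\mathbb{E}(\chi\circ T^{j+1}|\mathcal F_i)$, giving $b_{i,n}\le K\widetilde C_\varphi^2 q^{1/\theta}$. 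The heavy factor $q^{1/\theta}$ thus appears only once and sits under the square root, yielding $\|\varphi_n\|_{L^{2q}_\mu}\le K\widetilde C_\varphi\,q^{\frac12(1+\frac1\theta)}\sqrt n$ for all $q$; the conclusion with $\theta'=\theta/(\theta+1)$ then follows from a power-series bound on the exponential moment $\int e^{\tau'n^{-\theta'}|\varphi_n|^{2\theta'}}d\mu\le2$ plus Markov, which is equivalent to your optimization of $p$. Your treatment of the coboundary tail and your trivial-regime reductions ($\epsilon\le\|\varphi\|_{L^\infty_\mu}$, $n\epsilon^2$ bounded below) are fine, and in the paper's scheme they are not even needed.
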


\section{Proof}\label{sec:proofs}

To prove Theorem~\ref{pro:large_deviations}, we will estimate all the moments of $\varphi_n$.

\begin{Lem} \label{lem:moments} \em
There exists $K = K(\theta, \tau)>0$ such that for all $q >0$ and $n \ge 1$,  and all $\varphi \in L^\infty(\mu)$ satisfying \eqref{eq:decay}, we have
\[
\left(\int_X | \varphi_n|^q d \mu \right)^{\frac{1}{q}} \le K \widetilde{C}_\varphi q^{\frac{1}{2} \left(1 + \frac{1}{\theta} \right)} n^{\frac 1 2}.
\]

\end{Lem}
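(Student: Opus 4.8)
The plan is to bound the moments of $\varphi_n$ by using the Gordin martingale decomposition mentioned in the introduction, together with Rio's inequality, which is the strategy the authors announced they would follow. The key mechanism is that the decay of correlations hypothesis \eqref{eq:decay} controls conditional expectations, and these in turn control the martingale differences. Concretely, I would first use the decay of correlations against $L^\infty$ to estimate the $L^\infty$ or $L^2$ norms of the conditional expectations $\mathbb{E}[\varphi \circ T^j \mid \mathcal{F}]$ for an appropriate decreasing filtration $\mathcal{F}_k = (T^k)^{-1}\mathcal{B}$. The point of writing the correlation against arbitrary $\psi \in L^\infty$ is precisely that it yields, by duality, a bound on such conditional expectations: taking $\psi$ to be (a bounded approximation of) the sign of the conditional expectation shows that $\|\mathbb{E}_\mu[\varphi \mid \mathcal{F}_n]\|$ decays like $C_\varphi e^{-\tau n^\theta}$, and analogous decay holds for the "mixing coefficients" governing the correlation structure of the process $(\varphi \circ T^k)_k$.

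Next I would set up Rio's inequality for the moments of a sum whose dependence is controlled by these coefficients. Rio's inequality gives a bound of the form
\[
\left\| \varphi_n \right\|_q^2 \le C q \sum_{k=0}^{n-1} \left\| \varphi \, b_k \right\|_{q/2} ,
\]
or more precisely bounds the $L^q$ norm of the partial sum in terms of $q$ and a sum over the coupling/mixing coefficients $b_k$ weighted against $\varphi$, where $b_k$ encodes the $k$-step dependence. The crucial feature is that the $q$-dependence enters only linearly under the square, which is what produces the exponent $q^{1/2}$ multiplying the geometric part of the claimed bound. The remaining factor $q^{\frac{1}{2\theta}}$, and hence the combined power $q^{\frac{1}{2}(1 + 1/\theta)}$, must come from summing the stretched-exponential coefficients $e^{-\tau k^\theta}$ in a $q$-dependent way: when one truncates the sum over $k$ at a level where $e^{-\tau k^\theta}$ is comparable to a threshold depending on $q$, the natural cutoff is $k \sim (\text{const} \cdot \log q)^{1/\theta}$ type scales, and carrying through the bookkeeping of how the tail sum $\sum_k k^{a} e^{-\tau k^\theta}$ behaves yields exactly the extra $q^{1/(2\theta)}$ factor. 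The linear-in-$n$ scaling $n^{1/2}$ comes from the fact that after summing the $n$ terms one obtains a bound proportional to $n$ inside the square.

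The main obstacle I expect is the careful extraction of the sharp $q$-dependence, i.e.\ obtaining the exponent $\frac{1}{2}(1+\frac1\theta)$ rather than a weaker power. This requires optimizing the interplay between the polynomial growth in $q$ coming from Rio's inequality and the stretched-exponential decay of the correlation coefficients, and keeping the constants uniform in $q$ and $n$. A secondary technical point is justifying the martingale/coboundary decomposition and verifying that the coboundary term does not spoil the moment bound; since $\varphi$ is bounded, the coboundary is controlled in $L^\infty$ and contributes only a bounded (hence harmless) correction to each moment. Once Lemma~\ref{lem:moments} is established in the form stated, Theorem~\ref{pro:large_deviations} follows by a standard Markov/Chebyshev argument at order $q$ followed by optimizing over $q$: one writes $\mu(|\varphi_n| > n\epsilon) \le (n\epsilon)^{-q} \|\varphi_n\|_q^q$, substitutes the moment bound, and chooses $q$ proportional to $\epsilon^{2\theta'} n^{\theta'}$ to balance the exponents, which produces the stretched-exponential rate with exponent $\theta' = \frac{\theta}{\theta+1}$.
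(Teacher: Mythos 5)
Your overall strategy --- Gordin's martingale--coboundary decomposition followed by Rio's inequality --- is exactly the route the paper takes, and several of your individual steps match: the duality argument showing that \eqref{eq:decay} forces $\|\mathcal{L}^n\varphi\|_{L^1_\mu} \le C_\varphi e^{-\tau n^\theta}$ (equivalently, $L^1$-decay of the conditional expectations), Rio's inequality producing the factor $q^{1/2}$ and the scaling $n^{1/2}$, and the Markov-at-order-$q$ optimization for deducing the theorem. However, there is a genuine error at the crux of the lemma: you claim that since $\varphi$ is bounded, ``the coboundary is controlled in $L^\infty$ and contributes only a bounded (hence harmless) correction.'' This is false under hypothesis \eqref{eq:decay}. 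The coboundary is $\chi = \sum_{k \ge 1} \mathcal{L}^k \varphi$, and the hypothesis gives decay of $\mathcal{L}^k\varphi$ only in $L^1(\mu)$; in $L^\infty$ there is no decay at all, so the series defining $\chi$ need not converge in $L^\infty$. Indeed, if $\chi$ were bounded, the martingale differences $\phi \circ T^k$ would be uniformly bounded and Azuma--Hoeffding would yield genuinely exponential large deviations --- which is known to require stronger hypotheses (e.g.\ decay against all $L^1$ observables, as in Theorem~E of \cite{AFLV11}) and would contradict the point of the present paper. The unboundedness of $\chi$ is precisely where the loss $\theta \mapsto \theta/(\theta+1)$ originates, so your argument, taken at face value, would prove a better but false bound.

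Relatedly, your bookkeeping for the factor $q^{\frac{1}{2\theta}}$ is not correct: truncating $\sum_k e^{-\tau k^\theta}$ at a threshold $k \sim (\mathrm{const} \cdot \log q)^{1/\theta}$ would produce only logarithmic growth in $q$, not the polynomial factor you need. The paper's actual mechanism is interpolation between the $L^1$-decay and the $L^\infty$-bound: $\|\mathcal{L}^k\varphi\|_{L^q_\mu} \le \|\mathcal{L}^k\varphi\|_{L^\infty_\mu}^{1-\frac1q} \, \|\mathcal{L}^k\varphi\|_{L^1_\mu}^{\frac1q} \le \widetilde{C}_\varphi \, e^{-\tau k^\theta/q}$, so the stretched-exponential rate is \emph{divided by $q$}; the change of variables $s = \tau t^\theta/q$ then gives $\sum_k e^{-\tau k^\theta/q} \le \theta^{-1} (q/\tau)^{1/\theta}\, \Gamma(1/\theta)$, hence $\|\chi\|_{L^q_\mu} \le K \widetilde{C}_\varphi \, q^{1/\theta}$. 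It is this polynomial growth of $\|\chi\|_{L^q_\mu}$, fed into the Rio coefficients $b_{i,n}$ via the identity $\sum_{k=i}^{j} \mathbb{E}(\varphi \circ T^k \mid \mathcal{F}_i) = \phi \circ T^i - \mathbb{E}(\chi \circ T^i \mid \mathcal{F}_i) + \mathbb{E}(\chi \circ T^{j+1} \mid \mathcal{F}_i)$, that gives $b_{i,n} \le K \widetilde{C}_\varphi^2 q^{1/\theta}$ and thus the exponent $\frac12\bigl(1+\frac1\theta\bigr)$ after taking the $2q$-th root. Your sketch asserts the right exponent but the two mechanisms you propose (an $L^\infty$ coboundary, and a logarithmic cutoff) would each derive a different and incorrect $q$-dependence.
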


\begin{proof} In this proof, we will use $K$ to designate a generic constant whose value can change from one occurence to the other. The value of $K$ depends only on $\theta$ and $\tau$, and in particular is independent from $n$, $q$ and the observable $\varphi$.

We will follow closely the proof of \cite[Lemma 2.1]{M09}, adapted to our assumption of stretched exponential decay, keeping a precise track of the dependence in $q$ of all the estimates.

Let $\mathcal{L} : L^1(\mu) \to L^1(\mu)$ be the transfer operator of $T$, {\em i.e.} the unique operator satisfying 
\[
\int_X \varphi . \psi \circ T d \mu = \int_X \mathcal{L} \varphi . \psi d \mu, \: \: \forall \varphi \in L^1(\mu), \: \forall \psi \in L^\infty(\mu).
\]
Since $L^\infty(\mu)$ is the dual of $L^1(\mu)$, \eqref{eq:decay} implies that $\| \mathcal{L}^n \varphi\|_{L^1_\mu} \le C_\varphi e^{-\tau n^\theta}$. Hence, for all $q \ge 1$, 
\[
\int_X | \mathcal{L}^n \varphi|^q d \mu \le \| \mathcal{L}^n \varphi\|_{L^\infty_\mu}^{q-1} \int_X | \mathcal{L}^n \varphi| d \mu \le C_\varphi \| \varphi \|_{L^\infty_\mu}^{q-1} e^{-\tau n^\theta}.
\]

Defining $\chi = \sum_{n=1}^\infty \mathcal{L}^n \varphi$, we get, for all $q \ge 1$,
\[
\begin{aligned}
\| \chi \|_{L^q_\mu} \le \sum_{n=1}^\infty \| \mathcal{L}^n \varphi\|_{L^q_\mu} 
\le \sum_{n=1}^\infty C_\varphi^{\frac 1 q} \| \varphi\|_{L^\infty_\mu}^{1 - \frac 1 q}e^{- \frac{\tau}{q} n^\theta}
 \le \widetilde{C}_\varphi \sum_{n=1}^\infty e^{- \frac{\tau}{q} n^\theta} 
& \le  \widetilde{C}_\varphi \int_0^\infty e^{- \frac{\tau}{q} t^\theta} dt \\
&=  \widetilde{C}_\varphi \frac{1}{ \theta} \left( \frac{q}{\tau}\right)^{\frac{1}{\theta}} \int_0^\infty e^{-s} s^{\frac{1}{\theta} - 1} ds \\
&=  \widetilde{C}_\varphi  q^{\frac{1}{\theta}} \frac{1}{\theta} \frac{1}{\tau^{\frac{1}{\theta}}} \Gamma \left(\frac{1}{\theta}\right) \\
&=  K \widetilde{C}_\varphi q^{\frac{1}{\theta}},
\end{aligned}
\]
where we have performed the change of variables $s = \frac{\tau}{q} t^\theta$.

Defining $\phi = \varphi + \chi - \chi \circ T$, we also have 
\[(
\| \phi \|_{L^q_\mu} \le \| \varphi \|_{L^q_\mu} + 2 \| \chi \|_{L^q_\mu} \le \| \varphi\|_{L^\infty_\mu} + 2 K \widetilde{C}_\varphi q^{\frac{1}{\theta}} \le K \widetilde{C}_\varphi q^{\frac{1}{\theta}}.
\]

It is immediate to verify that $\mathcal{L} \phi = 0$, and so $\{\phi \circ T^k; \, k = 0,1,2, \ldots\}$ is a sequence of reverse martingale differences. Passing to the natural extension, we can reduce the situation to the case where $\{\phi \circ T^k; \, k=0,1,2, \ldots \}$ is a sequence of martingale differences with respect to a filtration $\{ \mathcal{F}_k; \, k=0,1,2,\ldots \}$.

By Rio's inequality \cite[Theorem 2.5]{R00}, we have for all $q \ge 1$:
\[
\| \varphi_n \|_{L^{2q}_\mu}^{2q} \le \left( 4q \sum_{i=1}^n b_{i,n}\right)^q
\]
where
\[
b_{i,n} = \max_{i \le j \le n} \left\| \varphi \circ T^i \sum_{k=i}^j \mathbb{E}(\varphi \circ T^k | \mathcal{F}_i) \right\|_{L^q_\mu} \le \| \varphi\|_{L^\infty_\mu} \max_{i \le j \le n} \left\| \sum_{k=i}^j \mathbb{E}(\varphi \circ T^k | \mathcal{F}_i) \right\|_{L^q_\mu}
\]

From the definition of $\phi$ and the martingale property, it follows
\[
\sum_{k=i}^j \mathbb{E}(\varphi \circ T^k | \mathcal{F}_i) = \phi \circ T^i - \mathbb{E}(\chi \circ T^i | \mathcal{F}_i) + \mathbb{E}(\chi \circ T^{j+1} | \mathcal{F}_i),
\]
so that
\[
\left\| \sum_{k=i}^j \mathbb{E}(\varphi \circ T^k | \mathcal{F}_i) \right\|_{L^q_\mu} \le \| \phi \|_{L^q_\mu} + 2 \| \chi \|_{L^q_\mu} \le K \widetilde{C}_\varphi q^{\frac{1}{\theta}}.
\]
Hence $b_{i,n} \le \| \varphi\|_{L^\infty_\mu}K \widetilde{C}_\varphi q^{\frac{1}{\theta}} \le K \widetilde{C}_\varphi^2 q^{\frac{1}{\theta}}$ and we thus obtain for all $q \ge 1$
\[
\| \varphi_n \|_{L^{2q}_\mu}^{2q} \le K \left(  \widetilde{C}_\varphi^2q^{1 + \frac{1}{\theta}} n\right)^q,
\]
which yields the required estimate for all $q \ge 2$. The general case of $q >0$ is deduced by changing the value of the constant $K$, since the map $q \mapsto \left( \int_X | \varphi_n|^q d \mu\right)^{\frac{1 }{q}}$ is non decreasing for $q \in (0,\infty)$.
\end{proof}

\begin{Lem} \label{lem:exponential} \em
There exists $c = c(\theta, \tau)>0$ such that for all $\varphi \in L^\infty(\mu)$ satisfying \eqref{eq:decay}:
\[
 \sup_{n \ge 1} \int_X e^{\tau' n^{- \theta'} | \varphi_n|^{2 \theta'}} d \mu \le 2,
\]
with $\tau' = c \widetilde{C}_\varphi^{- 2 \theta'}$.
\end{Lem}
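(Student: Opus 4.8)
The plan is to expand the exponential moment as a power series and estimate each term with Lemma~\ref{lem:moments}. Write $\beta = 2\theta'$ and $\lambda = \tau' n^{-\theta'}$. Since all summands are nonnegative, Tonelli/monotone convergence gives
\[
\int_X e^{\lambda |\varphi_n|^{\beta}} \, d\mu = \sum_{m=0}^\infty \frac{\lambda^m}{m!} \int_X |\varphi_n|^{\beta m} \, d\mu .
\]
The term $m=0$ equals $1$; for $m \ge 1$ I would apply Lemma~\ref{lem:moments} with the (arbitrary positive) exponent $q = \beta m$, so that, setting $\alpha = \frac{1}{2}(1 + \frac{1}{\theta})$,
\[
\int_X |\varphi_n|^{\beta m} \, d\mu \le \big(K \widetilde{C}_\varphi \, n^{1/2}\big)^{\beta m} (\beta m)^{\alpha \beta m}.
\]

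The crucial point is the choice of exponent. Because $\theta' = \frac{\theta}{\theta+1}$, one has the algebraic identity $\alpha \beta = \frac{1}{2}(1 + \frac{1}{\theta}) \cdot 2\theta' = 1$, so the factor $(\beta m)^{\alpha \beta m}$ collapses to $(\beta m)^m = \beta^m m^m$. The $m$-th term of the series then reads $\frac{m^m}{m!} A^m$ with $A = \beta \lambda (K \widetilde{C}_\varphi n^{1/2})^{\beta}$, and the elementary inequality $m^m \le e^m m!$ (which follows from $e^m = \sum_k m^k/k! \ge m^m/m!$) bounds this term by $(eA)^m$.

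Next I would exhibit the two cancellations that make $A$ a harmless constant. Since $\beta/2 = \theta'$, the factor $n^{\beta/2}$ inside $A$ exactly cancels the $n^{-\theta'}$ coming from $\lambda$, so $A$ is independent of $n$; and since $\widetilde{C}_\varphi^{\beta} = \widetilde{C}_\varphi^{2\theta'}$ cancels against the $\widetilde{C}_\varphi^{-2\theta'}$ in $\tau' = c \widetilde{C}_\varphi^{-2\theta'}$, we get $A = \beta \tau' K^{\beta} \widetilde{C}_\varphi^{\beta} = 2\theta' c K^{2\theta'}$, which depends only on $\theta$, $\tau$ and the free parameter $c$. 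Choosing $c = c(\theta, \tau)$ small enough that $eA \le \frac{1}{2}$ (for instance $c \le (4 e \theta' K^{2\theta'})^{-1}$) turns the bound into a convergent geometric series,
\[
\int_X e^{\lambda |\varphi_n|^{\beta}} \, d\mu \le \sum_{m=0}^\infty (eA)^m = \frac{1}{1 - eA} \le 2 ,
\]
uniformly in $n \ge 1$, which is precisely the assertion.

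The only genuinely delicate step is recognizing that the exponent $\beta = 2\theta'$ is forced by the requirement $\alpha\beta = 1$: this single identity is what simultaneously removes the $n$-dependence and the $\widetilde{C}_\varphi$-dependence, and it is the reason the improved exponent $\theta' = \frac{\theta}{\theta+1}$ emerges. Everything else—the power-series expansion, the interchange of sum and integral by nonnegativity, and the Stirling-type bound $m^m \le e^m m!$—is routine, so I expect the write-up to be short once the identity $\alpha\beta=1$ is in place.
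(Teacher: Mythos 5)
Your proposal is correct and is essentially identical to the paper's own proof: the same power-series expansion justified by nonnegativity, the same application of Lemma~\ref{lem:moments} with exponent $q = 2\theta' k$ exploiting the identity $\frac{1}{2}\left(1+\frac{1}{\theta}\right)\cdot 2\theta' = 1$ to cancel both the $n$- and $\widetilde{C}_\varphi$-dependence, the same Stirling-type bound $k! \ge (k/e)^k$, and even the same choice of constant $c = (4e\theta' K^{2\theta'})^{-1}$. The only cosmetic difference is that the paper tunes $c$ so the geometric ratio is exactly $\frac{1}{2}$ and sums to $2$, while you bound $\frac{1}{1-eA} \le 2$; this is immaterial.
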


\begin{proof} By expanding the exponential in power series and using Fubini's theorem and Lemma~\ref{lem:moments}, we have for all $\tau'>0$ and $n \ge 1$:
\[
\int_X e^{\tau' n^{- \theta'} | \varphi_n|^{2 \theta'}} d \mu  = \sum_{k=0}^{\infty} \frac{(\tau')^kn^{-k  \theta'}}{k!}  \int_X | \varphi_n |^{2 k \theta'} d \mu  \le \sum_{k=0}^{\infty} \left(2 \theta' \tau' K^{2\theta'} \widetilde{C}_\varphi^{2 \theta'} \right)^k \frac{k^k}{k!}.
\]
Since, by induction over $k$, we have $k! \ge \left(\frac{k}{e} \right)^k$ for all $k \ge 1$, it follows
\[
\int_X e^{\tau' n^{- \theta'} | \varphi_n|^{2 \theta'}} d \mu \le \sum_{k=0}^{\infty} \left( 2e\theta'K^{2\theta'} \widetilde{C}_\varphi^{2 \theta'} \tau' \right)^k = 2,
\]
if we set $\tau' = c \widetilde{C}_\varphi^{-2\theta'}$ for $c = (4e \theta' K^{2\theta'})^{-1}$.
\end{proof}

\begin{proof}[Proof of Theorem~\ref{pro:large_deviations}] For all $n\ge 1$ and $\epsilon >0$, by Markov's inequality and Lemma \ref{lem:exponential}, 
\[
\begin{aligned}
\mu(| \varphi_n | > n \epsilon) & = \mu(e^{\tau'n^{-\theta'} |\varphi_n|^{2 \theta'}} > e^{\tau'n^{\theta'} \epsilon^{2 \theta'}}) \\
& \le e^{-\tau'n^{\theta'} \epsilon^{2 \theta'} } \int_X e^{\tau'n^{-\theta'}  | \varphi_n |^{2 \theta'}} d \mu \\
& \le 2 e^{-\tau'n^{\theta'} \epsilon^{2\theta'}}.
\end{aligned}
\]
\end{proof}

\begin{Rem} By changing the value of $c$, we can replace the constant $2$ by any constant of the form $1+ \delta$, $\delta >0$.
\end{Rem}

\bibliographystyle{amsalpha}


\bibliography{LDImprovement}

\end{document}